\documentclass[12pt]{article}
\usepackage{mystyle}
\begin{document}
\title{The Ultrapower Axiom implies GCH above a supercompact cardinal}
\author{Gabriel Goldberg}
\maketitle
\begin{abstract}
We prove that the Generalized Continuum Hypothesis holds above a supercompact cardinal assuming the Ultrapower Axiom, an abstract comparison principle motivated by inner model theory at the level of supercompact cardinals.
\end{abstract}

\section{Introduction}
In this paper, we prove that the Generalized Continuum Hypothesis (GCH) holds above a supercompact cardinal assuming the Ultrapower Axiom (UA), an abstract comparison principle motivated by inner model theory at the level of supercompact cardinals:

\begin{thm}[UA]\label{MainThm}
Suppose \(\kappa\) is supercompact. Then for all cardinals \(\lambda\geq \kappa\), \(2^\lambda = \lambda^+\).
\end{thm}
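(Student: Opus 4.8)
The plan is to reduce to the regular cardinals and then exploit the canonical supercompactness ultrafilters furnished by UA. Arguing by induction on cardinals $\lambda\geq\kappa$, consider first a singular $\lambda$: the inductive hypothesis gives $2^{<\lambda}=\lambda$, and, since $\kappa$ is supercompact hence strongly compact, Solovay's theorem on the Singular Cardinals Hypothesis above a strongly compact cardinal gives $\lambda^{\mathrm{cf}(\lambda)}=\lambda^{+}$, so that $2^{\lambda}=(2^{<\lambda})^{\mathrm{cf}(\lambda)}=\lambda^{\mathrm{cf}(\lambda)}=\lambda^{+}$. Thus it suffices to handle regular $\lambda\geq\kappa$, and in that case the inductive hypothesis also yields $\lambda^{<\kappa}=\lambda$, so $|P_\kappa(\lambda)|=\lambda$ and every normal fine ultrafilter on $P_\kappa(\lambda)$ may be identified with a subset of $\lambda$; this bookkeeping will be used below.

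Now fix a regular $\lambda\geq\kappa$; the goal is $2^{\lambda}\leq\lambda^{+}$. Since supercompactness on its own is consistent with $2^{\lambda}$ arbitrarily large (Silver), the argument must draw all of its strength from UA, and the feature I would use is the rigidity of countably complete ultrafilters under UA: the Ketonen order $<_{k}$ is linear and well-founded, every countably complete ultrafilter is Dodd sound, and each instance of supercompactness is witnessed by a $<_{k}$-least ultrafilter. Accordingly I would fix the $<_{k}$-least normal fine ultrafilter $\mathcal{U}$ on $P_\kappa(\lambda^{+})$ witnessing $\lambda^{+}$-supercompactness of $\kappa$, with ultrapower $j\colon V\to M$, where $\mathrm{crit}(j)=\kappa$, $j(\kappa)>\lambda^{+}$, and ${}^{\lambda^{+}}M\subseteq M$ (so $M$ computes $\lambda^{+}$ and $\lambda^{++}$ correctly). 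Dodd soundness of $\mathcal{U}$ places the extender fragment of $j$ below $\sup j[\lambda^{+}]$ — in particular the restriction $j\restriction\lambda^{+}$ — inside $M$. The decisive step is to convert this into the bound $2^{\lambda}\leq\lambda^{+}$: I would compare $j$, via UA, with an auxiliary embedding such as the ultrapower by a uniform $\kappa$-complete ultrafilter on $\lambda$ supplied by strong compactness, and argue that the captured fragment of $j$ lets one recover an arbitrary $A\subseteq\lambda$ from a bounded amount of data; a failure of this — equivalently, $2^{\lambda}\geq\lambda^{++}$ — should then produce a configuration of ultrafilters on $\lambda$ incompatible with the linearity or well-foundedness of $<_{k}$, or with the $<_{k}$-minimality of $\mathcal{U}$.

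The step I expect to be the main obstacle is exactly this last conversion: extracting a genuine bound on $2^{\lambda}$ from the Dodd soundness of the minimal supercompactness ultrafilter. By Silver's construction no such bound is available from the large-cardinal hypothesis alone, so the entire quantitative content of the theorem must come from the comparison theory that UA provides, and making the Ketonen-order combinatorics interact correctly with cardinal arithmetic at $\lambda$ is where I expect the real work to lie. Once the regular case is established, the value $2^{\kappa}=\kappa^{+}$ is the base case $\lambda=\kappa$, run with the full supercompactness of $\kappa$ in place of $\lambda^{+}$-supercompactness, and Theorem~\ref{MainThm} follows.
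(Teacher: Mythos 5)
Your reduction of the singular case to Solovay's SCH theorem and the bookkeeping $\lambda^{<\kappa}=\lambda$ are fine (and match what the paper does for singular $\lambda$), but the proposal does not actually contain a proof of the theorem: the entire content lies in the regular case, and there you only gesture at a strategy --- ``compare $j$ with an auxiliary embedding \dots{} recover an arbitrary $A\subseteq\lambda$ from a bounded amount of data \dots{} a failure should produce a configuration incompatible with linearity of $<_k$'' --- and you say yourself that this conversion is the main obstacle. That is precisely the step that needs an argument, and nothing you cite supplies it. In particular, Dodd soundness is not doing the work you hope: for a normal fine ultrafilter on $P_\kappa(\lambda^+)$ the restriction $j\restriction\lambda^+$ lies in $M$ for the trivial reason that $j[\lambda^+]=[\mathrm{id}]\in M$, and (as you note via Silver) this kind of closure gives no bound on $2^\lambda$ whatsoever. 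Also, a normal fine ultrafilter on $P_\kappa(\lambda)$ is a subset of $P(P_\kappa(\lambda))$, so it codes as a subset of $2^\lambda$, not of $\lambda$; the identification you want is with an ultrafilter on $\lambda$, which is harmless but worth stating correctly.

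For contrast, the paper's mechanism at regular cardinals is concrete and quite different from $<_k$-minimality of a supercompactness measure. The engine is \cref{MitchellLemma}: under UA, every countably complete ultrafilter on an ordinal below $\lambda$ belongs to every $\lambda$-closed ultrapower (proved by comparing the two embeddings and using \cref{ClubLemma} and \cref{FirstAttemptLemma} to show the comparison puts $[\mathrm{id}]_D$ into $i(j[\lambda])$, so that $D$ is definable over the target model). Granting this, \cref{HardPart} assumes $2^\delta>\delta^+$, uses Solovay's capturing argument to get every $A\subseteq\delta^{++}$ into some $M_U$ with $U$ normal fine on $P_\kappa(\delta)$, concludes $P(\delta^{++})\subseteq M_W$ for a $\delta^+$-supercompact ultrafilter $W$ on $\delta^+$, and then derives an ultrafilter $D$ on $\delta^+$ with $D\in M_D$ --- an outright contradiction. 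GCH at $\delta^+$ and $\delta^{++}$ then comes not from recovering sets from fragments of $j$ but from counting: by \cref{SolovayEasy}, linearity of the Mitchell order on normal fine ultrafilters on $P_\kappa(\delta)$ bounds their number by $(2^\delta)^+$, while Solovay's construction produces $2^{2^\delta}$ of them. If you want to salvage your outline, the missing idea you should import is exactly \cref{MitchellLemma} together with the ``$D\in M_D$'' contradiction of \cref{HardPart}; the $<_k$-least supercompactness ultrafilter and Dodd soundness, as deployed in your sketch, do not by themselves interact with $2^\lambda$.
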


This result is significant in several ways. First, it indicates the possibility that UA along with large cardinals yield some kind of abstract fine structure theory for \(V\) above the least supercompact cardinal. We push this further by proving a combinatorial strengthening of GCH at certain successor cardinals, namely Jensen's \(\diamondsuit\) Principle on the critical cofinality.  

Second, it shows that the eventual GCH follows from a purely ``large cardinal structural" assumption, namely UA + a supercompact, an assumption that has a certain amount of plausibility to it. This is of some philosophical interest since GCH is among the most prominent principles independent of the axioms of set theory.

A third and more technical way in which the theorem is significant is that by proving a very local version of it, we will be able to remove cardinal arithmetic hypotheses from various applications of UA. For example, in \cite{MO}, we prove the linearity of the Mitchell order (essentially) on normal fine ultrafilters on \(P(\lambda)\) assuming UA + \(2^{<\lambda} = \lambda\); in \cite{Frechet}, we remove the hypothesis that \(2^{<\lambda} = \lambda\) using \cref{MainTheorem} and the results of \cite{Frechet}.
\section{Preliminaries}
\begin{defn}
Suppose \(P\) and \(Q\) are inner models. If \(U\) is a \(P\)-ultrafilter, we denote the ultrapower of \(P\) by \(U\) using functions in \(P\) by \(j_U^P : V\to M_U^P\).
\end{defn}
\begin{defn}
If \(i :  P \to Q\) is an elementary embedding, we say \(i\) is:
\begin{enumerate}[(1)]
\item an {\it ultrapower embedding} of \(P\) if there is a \(P\)-ultrafilter \(U\) such that \(i = j_U^P\).
\item an {\it internal ultrapower embedding} of \(P\) if there is a \(P\)-ultrafilter \(U\in P\) such that \(i = j_U^P\).
\end{enumerate}
\end{defn}

\begin{defn}
If \(M_0,M_1,\) and \(N\) are inner models, we write \((i_0,i_1) : (M_0,M_1)\to N\) to denote that \(i_0 : M_0\to N\) and \(i_1 : M_1\to N\) are elementary embeddings.
\end{defn}

\begin{defn}
Suppose \(j_0 : V\to M_0\) and \(j_1 : V\to M_1\) are ultrapower embeddings. A pair of elementary embeddings \((i_0,i_1) : (M_0,M_1)\to N\) is a {\it semicomparison} of \((j_0,j_1)\) if \(i_0\circ j_0 = i_1\circ j_1\) and \(i_1\) is an internal ultrapower embedding of \(M_1\).
\end{defn}

We warn that the notion of a semicomparison is not symmetric: that \((i_0,i_1)\) is a semicomparison of \((j_0,j_1)\) does not imply that \((i_1,i_0)\) is a semicomparison of \((j_1,j_0)\).

\begin{defn}
Suppose \(j_0 : V\to M_0\) and \(j_1 : V\to M_1\) are ultrapower embeddings. A {\it comparison} of \((j_0,j_1)\) is a pair \((i_0,i_1) : (M_0,M_1)\to N\) such that \((i_0,i_1)\) is a semicomparison of \((j_0,j_1)\) and \((i_1,i_0)\) is a semicomparison of \((j_1,j_0)\).
\end{defn}

\begin{ua}
Every pair of ultrapower embeddings admits a comparison.
\end{ua}
\section{Ultrafilters below a cardinal}
The main result of this section is a weakening of the main result of \cite{MO} that is proved without recourse to a cardinal arithmetic hypothesis.

\begin{prp}[UA]\label{MitchellLemma}
Suppose \(\lambda\) is an infinite cardinal and \(M\) is an ultrapower of the universe such that \(M^\lambda\subseteq M\). Then any countably complete ultrafilter on an ordinal less than \(\lambda\) belongs to \(M\).
\end{prp}
Note that the conclusion of \cref{MitchellLemma} is also a consequence of the hypothesis that \(2^{<\lambda} =\lambda\). It is consistent with ZFC, however, that the conclusion of \cref{MitchellLemma} is false. 

In order to prove \cref{MitchellLemma}, we need two preliminary lemmas. The first is the obvious attempt to extend the proof of the linearity of the Mitchell order on normal ultrafilters from \cite{MO} to normal fine ultrafilters.

\begin{lma}\label{FirstAttemptLemma}
Suppose \(j : V\to M\) is an ultrapower embedding and \(\lambda\) is a cardinal such that \(M^\lambda\subseteq M\). Suppose \(D\) is a countably complete ultrafilter on an ordinal \(\gamma \leq \lambda\). Suppose \((k,i) : (M_D,M)\to N\) is a semicomparison of \((j_D,j)\) such that \(k([\textnormal{id}]_D) \in i(j[\lambda])\). Then \(D\in M\).
\begin{proof}
Note that for any \(X\subseteq \gamma\),
\begin{align*}
X\in D&\iff [\text{id}]_D\in j_D(X)\\
&\iff k([\text{id}]_D)\in k(j_D(X))\\
&\iff k([\text{id}]_D)\in i(j(X))\\
&\iff k([\text{id}]_D)\in i(j(X))\cap i(j[\lambda])\\
&\iff k([\text{id}]_D)\in i(j(X)\cap j[\lambda])\\
&\iff k([\text{id}]_D)\in i(j[X])
\end{align*}
Therefore \begin{equation}\label{DInternal}D = \{X\subseteq \gamma : k([\text{id}]_D)\in i(j[X])\}\end{equation} Since \(j\restriction \gamma\in M\), the function defined on \(P(\gamma)\) by \(X\mapsto j[X]\) belongs to \(M\). Moreover \(i\) is an internal ultrapower embedding of \(M\) by the definition of a semicomparison. In particular, \(i\) is a definable subclass of \(M\). Therefore \cref{DInternal} shows that \(D\) is definable over \(M\) from parameters in \(M\), and hence \(D\in M\).  
\end{proof}
\end{lma}

Our next lemma puts us in a position to apply \cref{FirstAttemptLemma}. In this paper, we will only use it in the case \(A = j[\lambda]\), but the general statement is used in the proof of level-by-level equivalence at singular cardinals in \cite{SC}.

\begin{lma}\label{ClubLemma}
Suppose \(\lambda\) is a cardinal, \(j : V\to M\) is an ultrapower embedding, and \( A\subseteq j(\lambda)\) is a nonempty set that is closed under \(j(f)\) for every \(f : \lambda\to \lambda\). Suppose \(D\) is a countably complete ultrafilter on an ordinal \(\gamma < \lambda\). Suppose \((i,k) : (M,M_D)\to N\) is a semicomparison of \((j,j_D)\). Then \(k([\textnormal{id}]_D) \in i(A)\).
\begin{proof}
Let \(A' = k^{-1}[i(A)]\). By the definition of a semicomparison, \(k : M_D\to N\) is an internal ultrapower embedding, and therefore \(A'\in M_D\). We must show that \([\text{id}]_D\in A'\). 

We first show that \(j_D[\lambda]\subseteq A'\). Note that \(j[\lambda]\subseteq A\) since \(A\) is nonempty and closed under \(j(c_\alpha)\) for any \(\alpha < \lambda\), where \(c_\alpha : \lambda\to \lambda\) is the constant function with value \(\alpha\). Thus \(i\circ j[\lambda]\subseteq i(A)\). Since \((i,k)\) is a semicomparison, \(k\circ j_D[\lambda] = i\circ j[\lambda]\subseteq i(A)\). So \(j_D[\lambda]\subseteq k^{-1}[i(A)] = A'\). 

We now show that \(A'\) is closed under \(j_D(f)\) for any \(f : \lambda\to \lambda\). Fix \(\xi \in A'\) and \(f : \lambda\to \lambda\); we will show \(j_D(f)(\xi)\in A'\). By assumption \(A\) is closed under \(j(f)\), and so by elementarity \(i(A)\) is closed under \(i(j(f))\). In particular, since \(k(\xi)\in i(A)\), \(i(j(f))(k(\xi))\in i(A)\). But \(i(j(f))(k(\xi)) = k(j_D(f)(\xi))\). Now \(k(j_D(f)(\xi))\in i(A)\) so \(j_D(f)( \xi)\in k^{-1}(i(A)) = A'\), as desired.

Since \(\gamma < \lambda\) and \(j_D[\lambda]\subseteq A'\), \(A'\) contains \(j_D[\gamma^+]\). Thus \(A'\) is cofinal in the \(M_D\)-regular cardinal \(j_D(\gamma^+) = \sup j_D[\gamma^+]\). In particular, \(|A'|^{M_D}\geq j_D(\gamma^+)\). Fix \(\langle B_\xi : \xi < \gamma\rangle\) with \(A' = j_D(\langle B_\xi : \xi < \gamma\rangle)_{[\text{id}]_D}\). We may assume without loss of generality that \(B_\xi\subseteq \lambda\) and \(|B_\xi| \geq \gamma^+\) for all \(\xi < \gamma\). Therefore there is an injective function \(g :\gamma \to \lambda\) such that \(g(\xi)\in B_\xi\) for all \(\xi < \gamma\). By Los's theorem, \(j_D(g)([\text{id}]_D)\in A'\). Let \(f : \lambda\to \lambda\) be a function satisfying \(f(g(\xi)) = \xi\) for all \(\xi < \gamma\). Now \(A'\) is closed under \(j_D(f)\). But \(j_D(f)(j_D(g)([\text{id}]_D)) = [\text{id}]_D\). Therefore \([\text{id}]_D\in A'\), as desired.
\end{proof}
\end{lma}

\cref{MitchellLemma} now follows easily.
\begin{proof}[Proof of \cref{MitchellLemma}]
Fix a countably complete ultrafilter \(D\) on an ordinal less than \(\lambda\). By UA, there is a comparison \((k,i) : (M_D,M)\to N\) of \((j_D,j)\). By \cref{ClubLemma} with \(A = j[\lambda]\), \(k([\text{id}]_D)\in i(j[\lambda])\). Therefore the hypotheses of \cref{FirstAttemptLemma} are satisfied, so \(D\in M\), as desired.
\end{proof}

\section{A proof of GCH}
\cref{MainThm} above follows immediately from the following more local statement:

\begin{thm}[UA]\label{MainTheorem}
Suppose \(\kappa \leq \delta\) are cardinals with \(\kappa \leq \textnormal{cf}(\delta)\). If \(\kappa\) is \(\delta^{++}\)-supercompact, then for any cardinal \(\lambda\) with \(\kappa\leq \lambda \leq\delta^{++}\), \(2^\lambda = \lambda^+\).
\end{thm}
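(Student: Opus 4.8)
The plan is to reduce the statement to the case of regular $\lambda$ and then to treat that case with the ultrafilter technology behind \cref{MitchellLemma}.

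\emph{Reduction to regular $\lambda$.} I would induct on cardinals $\lambda$ with $\kappa\le\lambda\le\delta^{++}$, so that at stage $\lambda$ one has $2^\mu=\mu^+$ for all cardinals $\mu$ with $\kappa\le\mu<\lambda$. Since $\kappa$ is measurable, hence inaccessible, $2^\mu<\kappa$ for $\mu<\kappa$, so altogether $2^{<\lambda}=\lambda$. If $\lambda$ is singular this already forces $2^\lambda=\lambda^+$: because $\kappa$ is $\delta^{++}$-strongly compact, no singular cardinal in $(\kappa,\delta^{++}]$ is a counterexample to the Singular Cardinals Hypothesis (Solovay), and with $2^{<\lambda}=\lambda$ this gives $2^\lambda=\lambda^{\textnormal{cf}(\lambda)}=\lambda^+$. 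So it remains to handle regular $\lambda$ with $2^{<\lambda}=\lambda$. The hypothesis $\kappa\le\textnormal{cf}(\delta)$ is used precisely in this bookkeeping: it is what keeps Solovay's cardinal-arithmetic computations (such as $\delta^{<\kappa}=\delta$) valid, and thus keeps the cardinal arithmetic — and the relevant supercompactness embeddings — under control once $\lambda$ climbs to $\delta$, $\delta^+$, and $\delta^{++}$, where no supercompactness remains to spare.

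\emph{The regular case.} Fix regular $\lambda$ with $\kappa\le\lambda\le\delta^{++}$ and $2^{<\lambda}=\lambda$; the goal is $2^\lambda\le\lambda^+$. Using that $\kappa$ is $\lambda$-supercompact, fix a normal fine ultrafilter $\mathcal U$ on $P_\kappa(\lambda)$ (say the Mitchell-least one), with ultrapower $j:V\to M=M_{\mathcal U}$, so $M^\lambda\subseteq M$, $\textnormal{crit}(j)=\kappa$, and $j(\kappa)>\lambda$. The approach I would take is to assign to each $X\subseteq\lambda$ a countably complete ultrafilter $W_X$ built from $\mathcal U$, $X$, and an appropriate point of $j(\lambda)$, in such a way that (i) $X$ is recoverable from $W_X$ and (ii) $W_X$ lives on an ordinal small enough that \cref{MitchellLemma} applies and places $W_X$ inside $M$. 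Granting this, one invokes UA a second time: via comparison — exactly as in \cref{FirstAttemptLemma} and \cref{ClubLemma}, where the internal-ultrapower side of a comparison is shown to be a definable subclass of $M$ — one well-orders the family $\{W_X:X\subseteq\lambda\}$ inside $M$ with a rank bounded by $\lambda^+$, and reads off the desired enumeration of $P(\lambda)$. This is the ``abstract fine structure'' step foreshadowed in the introduction: the point is not that $M$ computes $2^\lambda$ correctly — it cannot, since $P(\lambda)\subseteq M$ — but that $M$, together with the rigidity of UA-comparisons, imposes a thin well-ordering on $P(\lambda)$.

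\emph{Main obstacle.} The real work is in making the coding $X\mapsto W_X$ both \emph{injective} and small enough to internalize, and then bounding the number of ultrafilters that arise by $\lambda^+$: a naive enumeration of the countably complete ultrafilters on a given ordinal only gives the bound $2^\lambda$, so the argument cannot avoid using the specific rigidity that UA provides — concretely, the uniqueness of the internal ultrapower in a comparison, which lets $M$ ``coordinatize'' the ultrafilters $W_X$ so that they cannot proliferate past $\lambda^+$. A secondary difficulty is the top case $\lambda=\delta^{++}$, where $\kappa$ is only $\lambda$-supercompact and there is no room to pass to a larger closure point; here one is forced to exploit that $\delta^{++}$ is a double successor, together with the cofinality hypothesis on $\delta$.
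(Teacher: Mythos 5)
Your reduction to the regular case is fine as bookkeeping (the singular case via Solovay and $2^{<\lambda}=\lambda$ matches Case 1 of the paper), but the proposal has a genuine gap exactly where the theorem lives: the regular case is never proved, only restated. You propose to code each $X\subseteq\lambda$ by a countably complete ultrafilter $W_X$ on a small ordinal, internalize it into $M$ via \cref{MitchellLemma}, and then ``well-order the $W_X$ with rank at most $\lambda^+$'' using the rigidity of UA comparisons — but you give no construction of $W_X$, no reason the assignment is injective, and, most importantly, no mechanism forcing the relevant family of ultrafilters to have size at most $\lambda^+$ rather than $2^\lambda$. Your own ``main obstacle'' paragraph concedes this; as written it is the theorem, not a proof of it. Note also that a uniform ``regular $\lambda$'' argument cannot work as stated at $\lambda=\delta^+$ or $\lambda=\delta^{++}$, where $\kappa$ is not $\lambda^{++}$-supercompact, so the induction cannot simply ``climb'' there on the strength of $2^{<\lambda}=\lambda$ alone.

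For contrast, the paper's mechanism is quite different at both pressure points. The hard step (\cref{HardPart}) is a proof by contradiction from $2^\delta>\delta^+$: Solovay's argument plus \cref{MitchellLemma} show every $A\subseteq\delta^{++}$ lies in $M_U$ for some normal fine ultrafilter $U$ on $P_\kappa(\delta)$, whence $P(\delta^{++})\subseteq M_W$ for a $\delta^+$-supercompact ultrafilter $W$ on $\delta^+$; composing with a $\delta^{++}$-supercompact ultrafilter $Z$ then produces an ultrafilter $D$ on $\delta^+$ with $D\in M_Z\cap V_{j_Z(\kappa)}=M_D\cap V_{j_Z(\kappa)}$, i.e.\ an ultrafilter belonging to its own ultrapower — contradiction. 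There is no coding of subsets by ultrafilters and no well-ordering of $P(\lambda)$ inside $M$. The cases $\lambda=\delta^+$ and $\lambda=\delta^{++}$ are then handled not by a regular-case argument but by a counting argument (\cref{SolovayEasy}): once $2^{<\delta}=\delta$ is known from \cref{HardPart}, the main theorem of \cite{MO} makes the Mitchell order linear on the $2^{2^\delta}$ normal fine ultrafilters on $P_\kappa(\delta)$, and its rank bound $(2^\delta)^+$ yields $2^{2^\delta}=(2^\delta)^+$. This is the kind of ``bounding the number of ultrafilters by a successor'' step you were reaching for, but it is only available after the hard part is done, and it is applied to normal fine ultrafilters ordered by the Mitchell order, not to ultrafilters coding arbitrary subsets of $\lambda$. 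To repair your proposal you would need, at minimum, an actual argument replacing the contradiction in \cref{HardPart}.
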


Combining \cref{MainTheorem} with the results of \cite{SC}, the hypothesis that \(\kappa\) is \(\delta^{++}\)-supercompact can be weakened to the assumption that \(\kappa\) is \(\delta^{++}\)-strongly compact.

The trickiest part of the proof is the following fact.
\begin{thm}[UA]\label{HardPart}
Suppose \(\kappa\) and \(\delta\) are cardinals such that \(\textnormal{cf}(\delta)\geq \kappa\). Suppose \(\kappa\) is \(\delta^{++}\)-supercompact. Then \(2^\delta = \delta^+\).
\begin{proof}
Assume towards a contradiction that \(2^\delta > \delta^+\). 
\begin{clm} For every \(A\subseteq \delta^{++}\), there is a normal fine \(\kappa\)-complete ultrafilter \(U\) on \(P_\kappa(\delta)\) with \(A\in M_U\). \end{clm}
\begin{proof} The argument for this is due to Solovay (\cite{Kanamori}, Theorem 3.8). Assume towards a contradiction that the claim fails. Let \(j : V\to M\) be an elementary embedding such that \(M^{\delta^{++}}\subseteq M\) and \(j(\kappa) > \delta^{++}\). The claim then fails in \(M\) since \(P(\delta^{++})\subseteq M\). Let \(W\) be the normal fine countably complete ultrafilter on \(P_\kappa(\delta)\) derived from \(j\) using \(j[\delta]\).

Let \(k : M_W\to M\) be the factor embedding, so \(k\circ j_W = j\) and \(k(j_W[\delta]) = j[\delta]\). Then \[\textsc{crt}(k) > (2^\delta)^{M_W}\geq \delta^{++}\] Since \(k[M_W]\) is an elementary substructure of \(M\) and the parameters \(\kappa\) and \(\delta^{++}\) belong to \(k[M_W]\), the claim fails in \(M\) for some \(A\in k[M_W]\). Since \(\textsc{crt}(k) > \delta^{++}\), \(k^{-1}(A) = A\), so the claim fails in \(M\) for some \(A\in M_W\). But \(W\in M\) by \cref{MitchellLemma}, and \(W\) witnesses that the claim is true for \(A\) in \(M\). This is a contradiction.
\end{proof}

Let \(W\) be a \(\delta^+\)-supercompact ultrafilter on \(\delta^+\) with \(j_W(\kappa) > \delta^+\). We claim \(P(\delta^{++})\subseteq M_W\). Suppose \(A\subseteq \delta^{++}\). For some normal fine \(\kappa\)-complete ultrafilter \(U\) on \(P_\kappa(\delta)\), \(A\in M_U\). But since \(|P_\kappa(\delta)|= \delta\), \(U\in M_W\) by \cref{MitchellLemma}. It is easy to see that this implies \(A\in M_W\).

Let \(Z\) be a \(\delta^{++}\)-supercompact ultrafilter on \(\delta^{++}\) with \(j_Z(\kappa) > \delta^{++}\). Let \(k : M_W\to N\) be the ultrapower of \(M_W\) by \(Z\) using functions in \(M_W\). Fix an ultrafilter \(D\) on \(\delta^+\) such that \(M_D = N\) and \(j_D = k\circ j_W\). 

Since \(P(\delta^{++})\subseteq M_W\), \((V_\kappa)^{\delta^{++}}\subseteq M_W\). Therefore letting \(\kappa' = j_Z(\kappa) = k(\kappa)\), we have \(N\cap V_{\kappa'} = M_Z\cap V_{\kappa'}\). By \cref{MitchellLemma}, \(D\in M_Z\), and therefore \(D\in M_Z\cap V_{\kappa'}\). It follows that \(D\in N = M_D\), a contradiction.
\end{proof}
\end{thm}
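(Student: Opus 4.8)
The plan is to argue by contradiction: assume $2^\delta \geq \delta^{++}$ and manufacture a countably complete ultrafilter that is an element of its own ultrapower, which is impossible. The engine of the argument will be \cref{MitchellLemma}: every countably complete ultrafilter on an ordinal below $\lambda$ is absorbed into any ultrapower of $V$ that is closed under $\lambda$-sequences. Since $\kappa$ is inaccessible and $\textnormal{cf}(\delta) \geq \kappa$, the sets $P_\kappa(\delta)$, $P_\kappa(\delta^+)$, $P_\kappa(\delta^{++})$ have cardinalities $\delta$, $\delta^+$, $\delta^{++}$ respectively, so normal fine $\kappa$-complete ultrafilters on them may be treated as countably complete ultrafilters on $\delta$, $\delta^+$, $\delta^{++}$, and \cref{MitchellLemma} applies to them.

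First I would prove, following Solovay, a capturing claim: every $A \subseteq \delta^{++}$ lies in $M_U$ for some normal fine $\kappa$-complete ultrafilter $U$ on $P_\kappa(\delta)$. If capturing failed for some $A$, fix a $\delta^{++}$-supercompactness embedding $j : V \to M$ with $M^{\delta^{++}} \subseteq M$ and $j(\kappa) > \delta^{++}$, let $W$ be the ultrafilter on $P_\kappa(\delta)$ derived from $j[\delta]$, and let $k : M_W \to M$ be the factor embedding. Since $P(\delta^{++}) \subseteq M$, capturing fails in $M$ too; since $\textsc{crt}(k)$ exceeds $(2^\delta)^{M_W} \geq \delta^{++}$ and $\kappa, \delta^{++} \in k[M_W]$, capturing already fails in $M$ for some $A \in M_W$. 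But $W \in M$ by \cref{MitchellLemma} (using $|P_\kappa(\delta)| = \delta < \delta^{++}$), and $W$ itself witnesses capturing for $A$ in $M$, a contradiction.

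Next I would fix a $\delta^+$-supercompact ultrafilter $W$ on $\delta^+$ with $j_W(\kappa) > \delta^+$ and show $P(\delta^{++}) \subseteq M_W$: given $A \subseteq \delta^{++}$, capture it in $M_U$ for some $U$ on $P_\kappa(\delta)$; since $|P_\kappa(\delta)| = \delta < \delta^+$ and $M_W^{\delta^+} \subseteq M_W$, \cref{MitchellLemma} gives $U \in M_W$, and then $A \in M_W$. Now iterate once more: let $Z$ be a $\delta^{++}$-supercompact ultrafilter on $\delta^{++}$ with $j_Z(\kappa) > \delta^{++}$, form $k : M_W \to N$ as the ultrapower of $M_W$ by $Z$ using functions in $M_W$, and let $D$ be the ultrafilter on $\delta^+$ with $j_D = k \circ j_W$ and $M_D = N$. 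Then $D$ is countably complete, and since $M_Z^{\delta^{++}} \subseteq M_Z$, \cref{MitchellLemma} gives $D \in M_Z$. Because $P(\delta^{++}) \subseteq M_W$ we get $(V_\kappa)^{\delta^{++}} \subseteq M_W$, so $N$ and $M_Z$ agree below $\kappa' := k(\kappa) = j_Z(\kappa)$, that is, $N \cap V_{\kappa'} = M_Z \cap V_{\kappa'}$; as $D$ has rank well below $\delta^{++} < \kappa'$, it follows that $D \in N = M_D$, which is impossible.

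I expect the main obstacle to be the middle step: funneling all of $P(\delta^{++})$ into a single ultrapower $M_W$. Capturing each individual $A \subseteq \delta^{++}$ in \emph{some} $P_\kappa(\delta)$-ultrapower is the classical Solovay trick, but consolidating the resulting many ultrapowers into one fixed $M_W$ is exactly where UA enters, through \cref{MitchellLemma}, the cardinal-arithmetic-free surrogate for ``$2^{<\lambda} = \lambda$'', and it relies essentially on $P_\kappa(\delta)$ having size only $\delta$. The concluding coherence fact $N \cap V_{\kappa'} = M_Z \cap V_{\kappa'}$ is also delicate, and it is precisely what forces us to have secured $P(\delta^{++}) \subseteq M_W$ rather than settling for $P(\delta^+) \subseteq M_W$.
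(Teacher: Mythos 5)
Your proposal is correct and follows essentially the same route as the paper: the Solovay capturing claim for subsets of \(\delta^{++}\), consolidation into a single \(\delta^+\)-supercompact ultrapower \(M_W\) via \cref{MitchellLemma}, and then the final contradiction \(D\in M_D\) obtained by comparing \(N = j_Z^{M_W}(M_W)\)-style ultrapower with \(M_Z\) below \(j_Z(\kappa)\), again using \cref{MitchellLemma}. The only differences are expository (e.g.\ your explicit remark that \(|P_\kappa(\delta^{+i})| = \delta^{+i}\) and your closing commentary), not mathematical.
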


The following lemma is essentially due to Solovay.

\begin{lma}[UA]\label{SolovayEasy}
Suppose \(\kappa\leq \delta\) are cardinals, \(\textnormal{cf}(\delta)\geq \kappa\) and \(\kappa\) is \(2^\delta\)-supercompact. Then \(2^{2^\delta} = (2^\delta)^+\).
\begin{proof}
By the same argument of Solovay (\cite{Kanamori}, Theorem 3.8), there are \(2^{2^\delta}\) normal fine ultrafilters on \(P_\kappa(\delta)\). Applying \cref{HardPart}, we have \(2^{<\delta} = \delta\), and so we can apply the main theorem of \cite{MO}, to conclude that these \(2^{2^\delta}\) normal fine ultrafilters on \(P_\kappa(\delta)\) are linearly ordered by the Mitchell order. The Mitchell order has rank at most \((2^\delta)^+\), so \(2^{2^\delta}\leq (2^\delta)^+\).
\end{proof}
\end{lma}

Regarding this lemma, a much more complicated argument in \cite{Frechet} shows that under UA, a set \(X\) carries at most \((2^{|X|})^+\) countably complete ultrafilters. With \cref{SolovayEasy} in hand, we can prove the main theorem of this paper.
\begin{proof}[Proof of \cref{MainTheorem}]
Suppose \(\lambda\) is a cardinal with \(\kappa \leq \lambda \leq \delta^{++}\). 
\begin{case} \(\lambda\leq \delta\)\end{case}
If \(\lambda\) is regular then by \cref{HardPart}, \(2^\lambda = \lambda^+\). If \(\lambda\) is singular then \(2^{<\lambda} = \lambda\) by \cref{HardPart}, so \(2^\lambda = \lambda^+\) by the local version of Solovay's theorem \cite{Solovay}.
\begin{case}\label{Last} \(\lambda =\delta^+\). \end{case}
Since \(\kappa\) is \(\delta^+\)-supercompact and \(2^\delta = \delta^+\), \(\kappa\) is \(2^\delta\)-supercompact. Therefore by \cref{SolovayEasy}, \(2^{2^\delta} = (2^\delta)^+\). In other words, \(2^{(\delta^+)} = \delta^{++}\). 
\begin{case} \(\lambda =\delta^{++}\) \end{case}
Given that \(2^{(\delta^+)} =\delta^{++}\) by \cref{Last}, the case that \(\lambda = \delta^{++}\) can be handled in the same way as \cref{Last}.
\end{proof}

\begin{cor}[UA]
Suppose \(\kappa\leq \delta\) and \(\kappa\) is \(2^\delta\)-supercompact. Then \(2^\delta = \delta^+\).
\begin{proof}
If \(\delta\) is singular this follows from Solovay's theorem \cite{Solovay}. Assume instead that \(\delta\) is regular. Assume towards a contradiction that \(2^\delta \geq \delta^{++}\). Then \(\kappa\) is \(\delta^{++}\)-supercompact, so by \cref{MainTheorem}, \(2^\delta = \delta^+\), a contradiction.
\end{proof}
\end{cor}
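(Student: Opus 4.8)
The plan is to derive this quickly from \cref{MainTheorem}, splitting into cases according to whether \(\delta\) is regular. The only point at which the corollary is not literally an instance of \cref{MainTheorem} is that we are not assuming \(\textnormal{cf}(\delta)\geq\kappa\): when \(\delta\) is regular this holds trivially, and when \(\delta\) is singular we instead appeal to Solovay's theorem, after first establishing \(2^{<\delta}=\delta\).

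Suppose first that \(\delta\) is singular, so in particular \(\delta>\kappa\) (if \(\delta=\kappa\) then \(\kappa\), being \(2^\kappa\)-supercompact, is inaccessible, hence regular). I would first show \(2^{<\delta}=\delta\). For any cardinal \(\mu<\kappa\) we have \(2^\mu<\kappa\leq\delta\) by inaccessibility of \(\kappa\). For any cardinal \(\mu\) with \(\kappa\leq\mu<\delta\), the cardinal \(\mu^+\) is regular with \(\textnormal{cf}(\mu^+)=\mu^+\geq\kappa\), and since \(\delta\) is a limit cardinal, \((\mu^+)^{++}\leq\delta\leq 2^\delta\); thus \(\kappa\) is \((\mu^+)^{++}\)-supercompact, and \cref{HardPart} yields \(2^{\mu^+}=(\mu^+)^+<\delta\), so \(2^\mu<\delta\). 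Hence \(2^{<\delta}=\delta\). Since \(\kappa\) is \(2^\delta\)-supercompact and hence \(\delta^+\)-strongly compact, the local version of Solovay's theorem \cite{Solovay} now gives \(2^\delta=\delta^+\).

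Now suppose \(\delta\) is regular, and assume towards a contradiction that \(2^\delta\neq\delta^+\), i.e. \(2^\delta\geq\delta^{++}\). Since \(\kappa\) is \(2^\delta\)-supercompact and \(\delta^{++}\leq 2^\delta\), \(\kappa\) is \(\delta^{++}\)-supercompact; moreover \(\kappa\leq\delta=\textnormal{cf}(\delta)\). So \cref{MainTheorem}, applied with \(\lambda=\delta\) (which satisfies \(\kappa\leq\delta\leq\delta^{++}\)), gives \(2^\delta=\delta^+\), contradicting \(2^\delta\geq\delta^{++}\). Therefore \(2^\delta=\delta^+\).

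I do not expect a genuine obstacle here: all of the real work is in \cref{HardPart} and \cref{MainTheorem}, and the corollary simply repackages them while dropping the cofinality hypothesis. The one thing to be careful about is the singular case, where \cref{MainTheorem} cannot be invoked directly; there one must either cite Solovay's theorem on the Singular Cardinal Hypothesis as a black box, or — as sketched above — bootstrap \(2^{<\delta}=\delta\) out of \cref{HardPart} so that the hypothesis of the local version of Solovay's theorem is available.
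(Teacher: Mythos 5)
Your proof is correct and follows the same skeleton as the paper's: for regular \(\delta\) the paper gives exactly your contradiction argument via \cref{MainTheorem}, and for singular \(\delta\) it simply cites Solovay's theorem. The real difference is that you flesh out the singular case, and rightly so: Solovay's theorem by itself only controls \(\delta^{\mathrm{cf}(\delta)}\), so one genuinely needs the UA input \(2^{<\delta}=\delta\) (which you extract from \cref{HardPart} at regular cardinals in \([\kappa,\delta)\), together with inaccessibility of \(\kappa\) below \(\kappa\)) before \(2^\delta=\delta^+\) follows; the paper leaves this implicit. One small refinement: your appeal to the local Solovay theorem is cleanest when \(\mathrm{cf}(\delta)<\kappa\), since the covering supplied by \(\kappa\)-complete fine ultrafilters bounds \(\delta^{\mathrm{cf}(\delta)}\) through \((\delta^+)^{<\kappa}=\delta^+\) only for exponents below \(\kappa\). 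When \(\delta\) is singular with \(\mathrm{cf}(\delta)\geq\kappa\) you can sidestep Solovay entirely: neither \cref{HardPart} nor \cref{MainTheorem} requires \(\delta\) to be regular, only \(\mathrm{cf}(\delta)\geq\kappa\), so your regular-case contradiction argument applies verbatim in that subcase. With that trivial adjustment the argument is airtight, and in the singular case somewhat more explicit than the paper's own.
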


Let us point out another consequence that one can obtain using a result in \cite{SC}:

\begin{thm}[UA]
Suppose \(\nu\) is a cardinal and \(\nu^+\) carries a countably complete uniform ultrafilter. Then \(2^{<\nu} = \nu\).
\begin{proof}
By one of the main theorems of \cite{SC}, some cardinal \(\kappa \leq\nu\) is \(\nu^+\)-supercompact. If \(\kappa = \nu\) then obviously \(2^{<\nu} = \nu\). So assume \(\kappa < \nu\). If \(\nu\) is a limit cardinal, then the hypotheses of \cref{MainTheorem} hold for all sufficiently large \(\lambda < \nu\) and hence GCH holds on a tail below \(\nu\), so \(2^{<\nu} = \nu\). So assume \(\nu = \lambda^+\) is a successor cardinal. If \(\lambda\) is singular, then \(\lambda\) is a strong limit singular cardinal by \cref{MainTheorem}, so \(2^\lambda = \lambda^+\) by Solovay's theorem \cite{Solovay}, and hence \(2^{<\nu} = \nu\). Finally if \(\lambda\) is regular, we can apply \cref{MainTheorem} directly to conclude that \(2^\lambda = \lambda^+\), so again \(2^{<\nu} = \nu\). 
\end{proof}
\end{thm}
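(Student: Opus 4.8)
The plan is to reduce the statement to \cref{MainTheorem} by way of a structural theorem of \cite{SC}. The one genuinely nontrivial input is that the existence of a countably complete uniform ultrafilter on \(\nu^+\) implies that some cardinal \(\kappa\leq\nu\) is \(\nu^+\)-supercompact; this is where the hypothesis of the theorem is consumed, and everything afterward is cardinal arithmetic. So I would begin by invoking \cite{SC} to fix such a \(\kappa\). If \(\kappa=\nu\), then \(\nu\) is in particular measurable, hence strongly inaccessible, so \(2^{<\nu}=\nu\) and we are done; thus I would henceforth assume \(\kappa<\nu\).

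With \(\kappa<\nu\) fixed, I would split into three cases. If \(\nu\) is a limit cardinal: for each cardinal \(\mu\) with \(\kappa\leq\mu<\nu\), apply \cref{MainTheorem} with \(\delta=\mu^+\) — this is legitimate because \(\textnormal{cf}(\delta)=\delta\geq\kappa\), \(\delta^{++}=\mu^{+++}<\nu\leq\nu^+\), and \(\kappa\) is \(\nu^+\)-supercompact hence \(\delta^{++}\)-supercompact — to get \(2^\mu=\mu^+<\nu\); for \(\mu<\kappa\) one has \(2^\mu<\kappa<\nu\) by inaccessibility of \(\kappa\); so \(\nu\) is a strong limit and \(2^{<\nu}=\nu\). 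If \(\nu=\lambda^+\) with \(\lambda\) regular: then \(\kappa\leq\lambda=\textnormal{cf}(\lambda)\) and \(\kappa\) is \(\lambda^{++}=\nu^+\)-supercompact, so \cref{MainTheorem} with \(\delta=\lambda\) yields \(2^\lambda=\lambda^+=\nu\), whence \(2^{<\nu}=2^\lambda=\nu\). If \(\nu=\lambda^+\) with \(\lambda\) singular: then \(\lambda\) is a limit cardinal, and the same argument as in the first case (carried out below \(\lambda\), using \(\mu^{+++}<\lambda<\nu^+\) for \(\mu<\lambda\)) shows \(\lambda\) is a strong limit; being singular, \(\lambda\) then satisfies \(2^\lambda=\lambda^+\) by Solovay's theorem \cite{Solovay}, so \(2^{<\nu}=2^\lambda=\nu\).

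The main obstacle is entirely localized in the appeal to \cite{SC}: passing from a single countably complete uniform ultrafilter on a successor cardinal to a supercompactness conclusion. Once that is in hand, the remaining work is the bookkeeping above, and its only delicate point — the cofinality hypothesis \(\textnormal{cf}(\delta)\geq\kappa\) in \cref{MainTheorem} — is arranged for free by always choosing \(\delta\) to be a successor cardinal.
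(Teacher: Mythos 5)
Your proposal is correct and follows essentially the same route as the paper: invoke the theorem of \cite{SC} to get a \(\nu^+\)-supercompact \(\kappa\leq\nu\), dispose of \(\kappa=\nu\) by inaccessibility, and then split into the cases \(\nu\) limit, \(\nu=\lambda^+\) with \(\lambda\) regular (apply \cref{MainTheorem} with \(\delta=\lambda\)), and \(\nu=\lambda^+\) with \(\lambda\) singular (strong limit by \cref{MainTheorem}, then Solovay). Your explicit bookkeeping with \(\delta=\mu^+\) just spells out the paper's remark that the hypotheses of \cref{MainTheorem} hold for all sufficiently large cardinals below \(\nu\).
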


This leaves open some questions about further localizations of the GCH proof.
\begin{qst}[UA]
Suppose \(\kappa\) is \(\delta\)-supercompact. Must \(2^\delta = \delta^+\)?
\end{qst}
We conjecture that it is consistent with UA that \(\kappa\) is measurable but \(2^\kappa > \kappa^+\), which would give a negative answer in the case \(\kappa = \delta\). In certain cases, the question has a positive answer as an immediate consequence of our main theorem:
\begin{prp}[UA]
Suppose \(\kappa \leq \lambda\), \(\textnormal{cf}(\lambda) = \omega\), and \(\kappa\) is \(\lambda\)-supercompact. Then \(2^\lambda = \lambda^+\).

Suppose \(\kappa \leq \lambda\), \(\omega_1 \leq \textnormal{cf}(\lambda) < \lambda\), and \(\kappa\) is \({<}\lambda\)-supercompact. Then \(2^\lambda = \lambda^+\).

Suppose \(\kappa \leq \lambda\), \(\lambda\) is the double successor of a cardinal of cofinality at least \(\kappa\), and \(\kappa\) is \(\lambda\)-supercompact. Then \(2^\lambda = \lambda^+\).\qed
\end{prp}

Another interesting localization question is the following:
\begin{qst}[UA]
Suppose \(\kappa\) is the least ordinal \(\alpha\) such that there is an ultrapower embedding \(j : V\to M\) with \(j(\alpha) > (2^{\kappa})^+\). Must \(2^\kappa = \kappa^+\)?
\end{qst}

\section{\(\diamondsuit\) on the critical cofinality}
We conclude with the observation that stronger combinatorial principles than GCH follow from UA.

\begin{thm}[UA]\label{Diamond}
Suppose \(\kappa\) is \(\delta^{++}\)-supercompact where \(\textnormal{cf}(\delta)\geq \kappa\). Then \(\diamondsuit(S^{\delta^{++}}_{\delta^+})\) holds.
\end{thm}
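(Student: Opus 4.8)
The plan is to pass through the cardinal arithmetic already established and reduce to Shelah's theorem on diamonds. First I would apply \cref{MainTheorem} with \(\lambda = \delta^+\): its hypotheses are \(\kappa\leq\delta\), \(\kappa\leq\textnormal{cf}(\delta)\), and ``\(\kappa\) is \(\delta^{++}\)-supercompact'', all of which hold here (note that \(\kappa\) being \(\delta^{++}\)-supercompact forces \(\kappa\leq\delta^{++}\), and together with \(\textnormal{cf}(\delta)\geq\kappa\) this gives \(\kappa\leq\textnormal{cf}(\delta)\leq\delta\)). Since \(\kappa\leq\delta^+\leq\delta^{++}\), we obtain \(2^{\delta^+}=\delta^{++}\). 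Observe moreover that \(\delta^+\) is a regular uncountable cardinal, being a successor cardinal with \(\delta\geq\kappa>\omega\).

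The second step is to invoke the theorem of Shelah that for a regular uncountable cardinal \(\mu\), the equality \(2^\mu = \mu^+\) implies \(\diamondsuit(S^{\mu^+}_\mu)\) (indeed the full \(\diamondsuit_{\mu^+}\)). Taking \(\mu = \delta^+\) and using \(2^{\delta^+}=\delta^{++}\) immediately yields \(\diamondsuit(S^{\delta^{++}}_{\delta^+})\). One can say slightly more: the full diamond principle holds on \(\delta^{++}\); for stationary subsets concentrating on cofinalities other than \(\delta^+\) this already follows from GCH (which \cref{MainTheorem} supplies) by Gregory's classical reflection argument, and \(\delta^+\) is precisely the ``critical'' cofinality that Gregory's argument cannot reach — this is the reason for the title of the section and for isolating exactly this instance.

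I expect the only genuine content to lie in the appeal to Shelah's diamond theorem, and I do not see how to derive the critical-cofinality case more directly from the supercompactness of \(\kappa\) in the spirit of the rest of the paper. The obstruction is that the relevant large-cardinal embeddings \(j\colon V\to M\) have critical point \(\kappa\leq\delta\), which is small relative to \(\delta^+\) and \(\delta^{++}\); in particular \(j\) carries cofinal subsets of \(\delta^+\) to subsets of \(M\) whose supremum exceeds \(\delta^{++}\), so the embedding does not localize a Kunen-style guessing construction at the points of \(S^{\delta^{++}}_{\delta^+}\). Thus UA and the supercompactness of \(\kappa\) enter only through the cardinal arithmetic forced by \cref{MainTheorem}, and the substantive input is the classical diamond machinery.
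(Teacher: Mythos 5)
There is a genuine gap at the crucial step: the ``theorem of Shelah'' you invoke is not a theorem. Shelah's result from the hypothesis \(2^\mu = \mu^+\) (\(\mu\) uncountable) gives \(\diamondsuit(S)\) only for stationary \(S\subseteq \{\alpha < \mu^+ : \textnormal{cf}(\alpha)\neq \textnormal{cf}(\mu)\}\); the critical cofinality \(S^{\mu^+}_{\textnormal{cf}(\mu)}\) is explicitly excluded, and for \(\mu = \omega\) Jensen's \(\textnormal{Con}(\textnormal{CH} + \neg\diamondsuit)\) shows it must be. Taking \(\mu = \delta^+\), the set \(S^{\delta^{++}}_{\delta^+}\) is exactly the critical-cofinality case, so \(2^{\delta^+} = \delta^{++}\) does not yield \(\diamondsuit(S^{\delta^{++}}_{\delta^+})\) by any known ZFC theorem --- this is precisely why the section is called ``\(\diamondsuit\) on the critical cofinality'' and why the statement has content beyond \cref{MainTheorem}. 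You in fact note yourself that Gregory's argument cannot reach cofinality \(\delta^+\), and then attribute to Shelah an improvement he did not prove; his improvement weakens the cardinal arithmetic hypothesis at the non-critical cofinalities, it does not capture the critical one.

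Consequently your closing paragraph, asserting that UA and supercompactness enter only through cardinal arithmetic and that the embeddings cannot localize a guessing construction at points of \(S^{\delta^{++}}_{\delta^+}\), is exactly what the paper's proof refutes. The paper uses \cref{MainTheorem} only to get GCH on \([\kappa,\delta^{++}]\), and then uses UA essentially: by the linearity of the Mitchell order on normal fine \(\kappa\)-complete ultrafilters on \(P_\kappa(\delta)\), for each \(\alpha<\delta^{++}\) there is a unique such ultrafilter \(\mathcal U_\alpha\) of Mitchell rank \(\alpha\), and one sets \(\mathcal A_\alpha = P(\alpha)\cap M_{\mathcal U_\alpha}\), a set of size at most \(\kappa^\delta = \delta^+\). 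A reflection argument with a normal fine ultrafilter \(\mathcal W\) on \(P_\kappa(\delta^{++})\) --- whose derived ultrafilter \(\mathcal U\) on \(P_\kappa(\delta)\) yields a factor embedding \(k : M_{\mathcal U}\to M_{\mathcal W}\) with critical point \(\gamma = \delta^{++M_{\mathcal U}}\) of cofinality \(\delta^+\), and \(\mathcal U\) has Mitchell rank exactly \(\gamma\), so \(\mathcal U = \mathcal U_\gamma\) --- shows \(\langle \mathcal A_\alpha\rangle\) is a \(\diamondsuit^-(S^{\delta^{++}}_{\delta^+})\)-sequence; Kunen's equivalence \(\diamondsuit^-(S)\iff\diamondsuit(S)\) finishes. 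So the critical point of the relevant embedding being \(\kappa\leq\delta\) is not an obstruction: the guessing is localized at \(\gamma\in S^{\delta^{++}}_{\delta^+}\), the critical point of the factor embedding between the two supercompact ultrapowers, not of \(j_{\mathcal W}\) itself. Your argument as written establishes only the cardinal arithmetic, not the diamond principle claimed.
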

For the proof, we need a theorem of Kunen.
\begin{defn}
Suppose \(\lambda\) is a regular uncountable cardinal and \(S\subseteq \lambda\) is a stationary set. Suppose \(\langle \mathcal A_\alpha : \alpha \in S\rangle\) is a sequence of sets with \(\mathcal A_\alpha \subseteq P(\alpha)\) and \(|A_\alpha| \leq \alpha\) for all \(\alpha < \lambda\). Then \(\langle \mathcal A_\alpha : \alpha \in S\rangle\) is a \(\diamondsuit^-(S)\)-sequence if for all \(X\subseteq \lambda\), \(\{\alpha \in S : X\cap \alpha\in \mathcal A_\alpha\}\) is stationary.
\end{defn}
\begin{defn}
\(\diamondsuit^-(S)\) is the assertion that there is a \(\diamondsuit^-(S)\)-sequence.
\end{defn}

\begin{thm}[Kunen, \cite{Kunen}]\label{KunenDiamond}
Suppose \(\lambda\) is a regular uncountable cardinal and \(S\subseteq \lambda\) is a stationary set. Then \(\diamondsuit^-(S)\) is equivalent to \(\diamondsuit(S)\).\qed
\end{thm}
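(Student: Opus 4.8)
The plan is to prove the two implications of the equivalence separately, with all the content residing in the reverse direction. The direction \(\diamondsuit(S)\Rightarrow\diamondsuit^-(S)\) is immediate: given a \(\diamondsuit(S)\)-sequence \(\langle A_\alpha : \alpha\in S\rangle\), I would set \(\mathcal A_\alpha = \{A_\alpha\}\), so that \(|\mathcal A_\alpha| = 1\leq \alpha\) and \(\{\alpha\in S : X\cap\alpha\in\mathcal A_\alpha\} = \{\alpha\in S : X\cap\alpha = A_\alpha\}\) is stationary for every \(X\subseteq\lambda\).

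For the substantive direction \(\diamondsuit^-(S)\Rightarrow\diamondsuit(S)\), I would first record the cardinal-arithmetic dividend \(2^{<\lambda}=\lambda\). Fix \(\mu<\lambda\); for any \(X\subseteq\mu\) we have \(X\cap\alpha = X\) whenever \(\alpha>\mu\), so the \(\diamondsuit^-\)-property (which makes the guessing set nonempty, indeed unbounded) yields some \(\alpha>\mu\) with \(X\in\mathcal A_\alpha\). Hence \(P(\mu)\subseteq\bigcup_{\alpha\in S}\mathcal A_\alpha\), and since \(\sum_{\alpha<\lambda}|\mathcal A_\alpha|\leq\sum_{\alpha<\lambda}\alpha = \lambda\), this forces \(2^\mu\leq\lambda\) and thus \(2^{<\lambda}=\lambda\). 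This lets me fix, on a club of \(\alpha\), well-behaved pairing functions and enumerations on \(\alpha\), so that throughout I may enumerate each family as \(\mathcal A_\alpha = \langle A^\alpha_\xi : \xi<\alpha\rangle\).

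The heart of the matter is to extract from the family a genuine, single-valued \(\diamondsuit(S)\)-sequence: at each level we are handed up to \(\alpha\) candidate guesses, one of which is stationarily often correct, but with no local way to know which. A natural first reduction is to guess the correct \emph{index}. For a target \(X\), the set \(G(X)=\{\alpha\in S : X\cap\alpha\in\mathcal A_\alpha\}\) is stationary, and on it the winning index \(w_X(\alpha) = \) the least \(\xi<\alpha\) with \(A^\alpha_\xi = X\cap\alpha\) is a regressive function. By Fodor's lemma, \(w_X\) is constant, say equal to \(i^\ast(X)\), on a stationary \(G'\subseteq G(X)\); hence the single column \(C^{i^\ast(X)} = \langle A^\alpha_{i^\ast(X)} : \alpha\in S\rangle\) guesses \(X\) correctly on the stationary set \(G'\). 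Thus the \(\lambda\)-many columns \(\langle C^i : i<\lambda\rangle\) \emph{jointly} guess every subset of \(\lambda\), in the sense that each \(X\) is guessed stationarily often by at least one column.

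The main obstacle — the step I expect to carry the real difficulty — is amalgamating these \(\lambda\)-many columns into one sequence \(\langle D_\alpha\rangle\). The naive move is to split \(S\) into \(\lambda\)-many pairwise disjoint stationary sets \(\langle S_i : i<\lambda\rangle\) (Solovay's theorem, valid for regular \(\lambda\)) and set \(D_\alpha = A^\alpha_i\) for \(\alpha\in S_i\); the trouble is that the stationary set on which column \(i^\ast(X)\) succeeds need not meet the designated block \(S_{i^\ast(X)}\) stationarily, and indeed I expect the blunt form of this amalgamation to be essentially equivalent to guessing all regressive functions, i.e.\ to \(\diamondsuit(S)\) itself. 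Overcoming it requires guessing \(X\) together with its winning index as a single object that is legible at the level \(\alpha\) and coherent with the partition — rather than recovering the index only from data about ordinals below \(\alpha\) — and arranging this coherence is the crux on which the whole reduction turns. Once a single sequence guessing every \(X\) stationarily often is produced, \(\diamondsuit(S)\) follows and, with the trivial direction, the asserted equivalence.
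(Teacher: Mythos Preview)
The paper does not prove this theorem: it is attributed to Kunen, stated with a terminal \qed, and the citation stands in for the argument. There is thus no in-paper proof against which to compare your attempt.

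On its own merits, your proposal is a correct outline up to, but not through, the one substantive step. The easy direction, the derivation of \(2^{<\lambda}=\lambda\), and the Fodor reduction to a \(\lambda\)-indexed family of columns \(\langle C^i : i<\lambda\rangle\) such that every \(X\subseteq\lambda\) is guessed stationarily often by some \(C^{i^\ast(X)}\) are all standard and correctly stated. But you then stop. You rightly observe that the naive Solovay-partition amalgamation need not work, and you gesture toward ``guessing \(X\) together with its winning index as a single object that is legible at level \(\alpha\),'' yet you never carry this out. The sentence ``arranging this coherence is the crux on which the whole reduction turns'' is a description of the difficulty, not a resolution of it, and your closing sentence is circular: a single sequence guessing every \(X\) stationarily often \emph{is} \(\diamondsuit(S)\). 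As written this is a plan that halts at precisely the point where Kunen's contribution lies; the encoding that simultaneously records \(X\) and a witness for its column, and the verification that decoding at level \(\alpha\) yields a genuine \(\diamondsuit(S)\)-sequence, must actually be supplied.
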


\begin{proof}[Proof of \cref{Diamond}]
By \cref{MainTheorem}, GCH holds on the interval \([\kappa,\delta^{++}]\), and we will use this without further comment.

For each \(\alpha < \delta^{++}\), let \(\mathcal U_\alpha\) be the unique ultrafilter of rank \(\alpha\) in the Mitchell order on normal fine \(\kappa\)-complete ultrafilters on \(P_\kappa(\delta)\). The uniqueness of \(\mathcal U_\alpha\) follows from the main theorem of \cite{MO}. Let \(\mathcal A_\alpha = P(\alpha)\cap M_{\mathcal U_\alpha}\). Note that \(|\mathcal A_\alpha| \leq \kappa^\delta = \delta^+\). Let \[\vec{\mathcal A} = \langle \mathcal A_\alpha : \alpha < \delta^{++}\rangle\] Note that \(\vec{\mathcal A}\) is definable in \(H_{\delta^{++}}\) without parameters.

\begin{clm} \(\vec{\mathcal A}\) is a \(\diamondsuit^-(S^{\delta^{++}}_{\delta^+})\)-sequence.\end{clm}
\begin{proof}
Suppose towards a contradiction that \(\vec{\mathcal A}\) is not a \(\diamondsuit^-(S^{\delta^{++}}_{\delta^+})\)-sequence. Let \(\mathcal W\) be a \(\kappa\)-complete normal fine ultrafilter on \(P_\kappa(\delta^{++})\). Then in \(M_\mathcal W\),  \(\vec{\mathcal A}\) is not a \(\diamondsuit^-(S^{\delta^{++}}_{\delta^+})\)-sequence. Let \(\mathcal U\) be the \(\kappa\)-complete normal fine ultrafilter on \(\delta\) derived from \(\mathcal W\) and let \(k : M_\mathcal U \to M_\mathcal W\) be the factor embedding. Let \(\gamma = \textsc{crt}(k) = \delta^{++M_\mathcal U}\).

Since  \(\vec{\mathcal A}\) is definable in \(H_{\delta^{++}}\) without parameters, \(\vec {\mathcal A}\in \text{ran}(k)\). Therefore \(k^{-1}(\vec {\mathcal A}) = \vec {\mathcal A}\restriction \gamma\) is not a \(\diamondsuit^-(S^{\gamma}_{\delta^+})\)-sequence in \(M_\mathcal U\). Fix a witness \(A\in P(\gamma)\cap M_\mathcal U\) and a closed unbounded set \(C\in P(\gamma)\cap M_\mathcal U\) such that for all \(\alpha \in C\cap S^{\gamma}_{\delta^+}\), \(A\cap \alpha\notin \mathcal A_\alpha\). By elementarity, for all \(\alpha\in k(C)\cap S^{\delta^{++}}_{\delta^+}\), \(k(A)\cap \alpha\notin \mathcal A_\alpha\). Since \(\mathcal U\) is \(\delta\)-supercompact, \(\text{cf}(\gamma) = \delta^+\), and so in particular \(k(A)\cap \gamma\notin \mathcal A_\gamma\). Since \(\gamma = \textsc{crt}(k)\), this means \(A\notin \mathcal A_\gamma\).

Note however that \(\mathcal U\) has Mitchell rank \(\delta^{++M_\mathcal U} = \gamma\), so \(\mathcal U = \mathcal U_\gamma\). Therefore \(\mathcal A_\gamma = P(\gamma)\cap M_\mathcal U\), so \(A\in \mathcal A_\gamma\) by choice of \(A\). This is a contradiction.
\end{proof}
By \cref{KunenDiamond}, this completes the proof.
\end{proof}

\section{On the linearity of the Mitchell order}
We close with the question of whether GCH follows from the linearity of the Mitchell order alone. To pose the question, we must first formulate the strongest statement of the linearity of the Mitchell order that we can prove from UA.
\begin{defn}
A uniform ultrafilter \(U\) on a cardinal is {\it seed-minimal} if \([\text{id}]_U\) is the least \(\alpha\in \text{Ord}^{M_U}\) such that \(M_U = H^{M_U}(j_U[V]\cup \{\alpha\})\). 
\end{defn}

In other words, \(U\) is minimal if no regressive function is one-to-one on a \(U\)-large set. Using the Axiom of Choice, it is easy to prove that any countably complete ultrafilter is isomorphic to a unique seed-minimal ultrafilter.

\begin{defn}
A {\it generalized normal ultrafilter} is a seed-minimal ultrafilter that is isomorphic to a normal fine ultrafilter on \(P(X)\) for some set \(X\). 
\end{defn}

\(U\) is a generalized normal ultrafilter on \(\lambda\) if and only if \(U\) is weakly normal and \(M_U\) is closed under \(\lambda\)-sequences. One of the main theorems of \cite{Frechet} states that UA implies that the Mitchell order is linear on generalized normal ultrafilters. This hypothesis is strictly weaker than UA, but one would expect it to be quite powerful in the context of a supercompact cardinal.

\begin{qst}
Assume the Mitchell order is linear on generalized normal ultrafilters. Does \(2^\lambda = \lambda^+\) for all \(\lambda\) greater than or equal to the least supercompact cardinal?
\end{qst}

\bibliography{Bibliography}{}

\begin{thebibliography}{1}

\bibitem{SC}
Gabriel Goldberg.
\newblock The equivalence of strong and supercompactness under {UA}.
\newblock To appear.

\bibitem{MO}
Gabriel Goldberg.
\newblock The linearity of the {M}itchell order.
\newblock To appear.

\bibitem{Frechet}
Gabriel Goldberg.
\newblock Fr\'echet cardinals.
\newblock To appear.

\bibitem{Kanamori}
Robert~M. Solovay, William~N. Reinhardt, and Akihiro Kanamori.
\newblock Strong axioms of infinity and elementary embeddings.
\newblock {\em Ann. Math. Logic}, 13(1):73--116, 1978.

\bibitem{Solovay}
Robert~M. Solovay.
\newblock Strongly compact cardinals and the {GCH}.
\newblock In {\em Proceedings of the {T}arski {S}ymposium ({P}roc. {S}ympos.
  {P}ure {M}ath., {V}ol. {XXV}, {U}niv. {C}alifornia, {B}erkeley, {C}alif.,
  1971)}, pages 365--372. Amer. Math. Soc., Providence, R.I., 1974.

\bibitem{Kunen}
Kenneth Kunen.
\newblock Elementary embeddings and infinitary combinatorics.
\newblock {\em J. Symbolic Logic}, 36:407--413, 1971.

\end{thebibliography}
\bibliographystyle{unsrt}

\end{document}